\documentclass{amsart}%
\usepackage{amssymb}
\usepackage{amsfonts}
\usepackage{geometry}
\usepackage{amsmath}
\usepackage{graphicx}%
\setcounter{MaxMatrixCols}{30}
\providecommand{\U}[1]{\protect\rule{.1in}{.1in}}
\newtheorem{theorem}{Theorem}
\theoremstyle{plain}

\newtheorem{definition}[theorem]{Definition}

\newtheorem{lemma}[theorem]{Lemma}

\newtheorem{remark}[theorem]{Remark}

\numberwithin{equation}{section}
\geometry{left=1in,right=1in,top=1in,bottom=1in}

\begin{document}
\title[Tops of dyadic grids]{Tops of dyadic grids and $T1$ theorems}
\author[M. Alexis]{Michel Alexis}
\address{Department of Mathematics \& Statistics, McMaster University, 1280 Main Street
West, Hamilton, Ontario, Canada L8S 4K1}
\email{alexism@mcmaster.ca}
\author[E.T. Sawyer]{Eric T. Sawyer}
\address{Department of Mathematics \& Statistics, McMaster University, 1280 Main Street
West, Hamilton, Ontario, Canada L8S 4K1 }
\email{Sawyer@mcmaster.ca}
\author[I. Uriarte-Tuero]{Ignacio Uriarte-Tuero}
\address{Department of Mathematics, University of Toronto\\
Room 6290, 40 St. George Street, Toronto, Ontario, Canada M5S 2E4\\
(Adjunct appointment)\\
Department of Mathematics\\
619 Red Cedar Rd., room C212\\
Michigan State University\\
East Lansing, MI 48824 USA}
\email{ignacio.uriartetuero@utoronto.ca}
\thanks{E. Sawyer is partially supported by a grant from the National Research Council
of Canada}
\thanks{I. Uriarte-Tuero has been partially supported by grant MTM2015-65792-P
(MINECO, Spain), and is partially supported by a grant from the National
Research Council of Canada}

\begin{abstract}
We extend the notion of a dyadic grid of cubes in $\mathbb{R}^{n}$ to include
infinite dyadic cubes. These `tops' of a dyadic grid form a tiling of
$\mathbb{R}^{n}$ which is subject to the constraints similar to those arising
in tiling Euclidean space by (finite) unit cubes. These tops arise in the
theory of two weight norm inequalities through weighted Haar and Alpert wavelets.

\end{abstract}
\maketitle
\tableofcontents

\section{Introduction}

Let $\mu$ be a positive locally finite Borel measure on $\mathbb{R}^{n}$, and
let $\mathcal{D}$ be a dyadic grid on $\mathbb{R}^{n}$. As pioneered by
Nazarov, Treil and Volberg in \cite{NTV}, \cite{NTV4}, etc., the weighted Haar
decomposition associated with a function $f\in L^{2}\left(  \mu\right)  $ and
an integer $N\in\mathbb{N}$ is given by%
\[
\sum_{\substack{Q\in\mathcal{D}\\\ell\left(  Q\right)  <2^{N}}}\bigtriangleup
_{Q}^{\mu}f+\sum_{\substack{Q\in\mathcal{D}\\\ell\left(  Q\right)  =2^{N}%
}}\mathbb{E}_{Q}^{\mu}f,
\]
and converges to $f$ both in $L^{2}\left(  \mu\right)  $ and $\mu$-almost
everywhere. See below for definitions of the Haar projections $\bigtriangleup
_{Q}^{\mu}$ and $\mathbb{E}_{Q}^{\mu}$. It is this form of Haar decomposition
that is used to prove virtually all $T1$ theorems in the literature. However,
in order to estimate the bilinear form%
\begin{align*}
\left\langle T\left(  f\sigma\right)  ,g\right\rangle _{\omega}  &
=\left\langle T\left(  \sum_{\substack{I\in\mathcal{D}\\\ell\left(  I\right)
<2^{N}}}\bigtriangleup_{I}^{\sigma}f\right)  ,\sum_{\substack{J\in
\mathcal{D}\\\ell\left(  J\right)  <2^{N}}}\bigtriangleup_{J}^{\omega
}g\right\rangle _{\omega}+\left\langle T\left(  \sum_{\substack{I\in
\mathcal{D}\\\ell\left(  I\right)  =2^{N}}}\mathbb{E}_{I}^{\sigma}f\right)
,\sum_{\substack{J\in\mathcal{D}\\\ell\left(  J\right)  =2^{N}}}\mathbb{E}%
_{J}^{\omega}g\right\rangle _{\omega}\\
&  +\left\langle T\left(  \sum_{\substack{I\in\mathcal{D}\\\ell\left(
I\right)  <2^{N}}}\bigtriangleup_{I}^{\sigma}f\right)  ,\sum_{\substack{J\in
\mathcal{D}\\\ell\left(  J\right)  =2^{N}}}\mathbb{E}_{Q}^{\omega
}g\right\rangle _{\omega}+\left\langle T\left(  \sum_{\substack{I\in
\mathcal{D}\\\ell\left(  I\right)  =2^{N}}}\mathbb{E}_{I}^{\sigma}f\right)
,\sum_{\substack{J\in\mathcal{D}\\\ell\left(  J\right)  <2^{N}}}\bigtriangleup
_{J}^{\omega}g\right\rangle _{\omega},
\end{align*}
associated with a truncation of a singular integral operator $T$, one must
begin by estimating the final three inner products on the right hand side
above. Since the cubes $I,J\in\mathcal{D}$ in the tiling with $\ell\left(
I\right)  =\ell\left(  J\right)  =2^{N}$, are infinite in number, one must
exercise special care in summing up estimates over these cubes $I$ and $J$.

In this paper, we push the terms $\sum_{\substack{I\in\mathcal{D}\\\ell\left(
I\right)  =2^{N}}}\mathbb{E}_{I}^{\sigma}f$ and $\sum_{\substack{J\in
\mathcal{D}\\\ell\left(  J\right)  =2^{N}}}\mathbb{E}_{J}^{\omega}g$ to
"infinity", where the corresponding sums reduce to finite sums over `tops'
associated to the dyadic grid $\mathcal{D}$, and where these tops form a
tiling of $\mathbb{R}^{n}$ by at most $2^{n}$ infinite cubes. Since the sums
are now finite, no special care is needed in their estimation.

\subsection{Tops of dyadic grids}

We say that a cube $Q$ in $\mathbb{R}^{n}$ is a $PSA$ cube if its sides are
parallel to the coordinate axes. Any union of an increasing sequence $\left\{
Q_{k}\right\}  _{k=1}^{\infty}$ of $PSA$ cubes $Q_{k}$ is either a $PSA$ cube
in $\mathbb{R}^{n}$ or a set of infinite diameter, that we refer to as an
\emph{infinite} $PSA$ cube. We say that a set $Q$ is a \emph{supercube} if it
is either a $PSA$ cube or an infinite $PSA$ cube.

\begin{definition}
A dyadic supergrid $\mathcal{D}$ in $\mathbb{R}^{n}$ is a collection of
supercubes satisfying

\begin{enumerate}
\item The set of all cubes in $\mathcal{D}$ form a dyadic grid.

\item The supercubes in $\mathcal{D}$ are nested in the sense that given
$Q,Q^{\prime}\in\mathcal{D}$, then either $Q\cap Q^{\prime}=\infty$, $Q\subset
Q^{\prime}$ or $Q^{\prime}\subset Q$.
\end{enumerate}
\end{definition}

We shall now investigate the nature of the supercubes of infinite diameter in
a dyadic supergrid. In particular we will show that they are uniquely
determined by the underlying dyadic grid, and that they form a very special
type of tiling of $\mathbb{R}^{n}$.

In order to motivate this investigation, we now recall the construction of
weighted Alpert wavelets in \cite{RaSaWi}, and correct a small oversight
there. Let $\mu$ be a locally finite positive Borel measure on $\mathbb{R}%
^{n}$, and fix $\kappa\in\mathbb{N}$. For each cube $Q$, denote by
$L_{Q;\kappa}^{2}\left(  \mu\right)  $ the finite dimensional subspace of
$L^{2}\left(  \mu\right)  $ that consists of linear combinations of the
indicators of\ the children $\mathfrak{C}\left(  Q\right)  $ of $Q$ multiplied
by polynomials of degree less than $\kappa$, and such that the linear
combinations have vanishing $\mu$-moments on the cube $Q$ up to order
$\kappa-1$:%
\[
L_{Q;\kappa}^{2}\left(  \mu\right)  \equiv\left\{  f=%
{\displaystyle\sum\limits_{Q^{\prime}\in\mathfrak{C}\left(  Q\right)  }}
\mathbf{1}_{Q^{\prime}}p_{Q^{\prime};\kappa}\left(  x\right)  :\int
_{Q}f\left(  x\right)  x^{\beta}d\mu\left(  x\right)  =0,\ \ \ \text{for
}0\leq\left\vert \beta\right\vert <\kappa\right\}  ,
\]
where $p_{Q^{\prime};\kappa}\left(  x\right)  =\sum_{\beta\in\mathbb{Z}%
_{+}^{n}:\left\vert \beta\right\vert \leq\kappa-1\ }a_{Q^{\prime};\beta
}x^{\beta}$ is a polynomial in $\mathbb{R}^{n}$ of degree less than $\kappa$.
Here $x^{\beta}=x_{1}^{\beta_{1}}x_{2}^{\beta_{2}}...x_{n}^{\beta_{n}}$. Let
$d_{Q;\kappa}\equiv\dim L_{Q;\kappa}^{2}\left(  \mu\right)  $ be the dimension
of the finite dimensional linear space $L_{Q;\kappa}^{2}\left(  \mu\right)  $.

Consider an arbitrary dyadic grid $\mathcal{D}$. For $Q\in\mathcal{D}$, let
$\bigtriangleup_{Q;\kappa}^{\mu}$ denote orthogonal projection onto the finite
dimensional subspace $L_{Q;\kappa}^{2}\left(  \mu\right)  $, and let
$\mathbb{E}_{Q;\kappa}^{\mu}$ denote orthogonal projection onto the finite
dimensional subspace%
\[
\mathcal{P}_{Q;\kappa}^{n}\left(  \mu\right)  \equiv
\mathrm{\operatorname*{Span}}\{\mathbf{1}_{Q}x^{\beta}:0\leq\left\vert
\beta\right\vert <\kappa\}.
\]
To obtain a complete set of orthonormal projections, we use the following
definition and lemma.

\begin{definition}
Define a $\mathcal{D}$\emph{-tower} $\Gamma\subset\mathcal{D}$ to be an
infinite sequence $\left\{  I_{m}\right\}  _{m=1}^{\infty}$ of nested dyadic
cubes $I_{m}\subset I_{m+1}$ with side lengths $\ell\left(  I_{m}\right)
=2^{m}$, and define $\operatorname{top}\Gamma\equiv\bigcup_{m=1}^{\infty}%
I_{m}$, which we refer to as the \emph{top} of the tower $\Gamma$.
\end{definition}

\begin{definition}
We define an equivalence relation $\sim$ on $\mathcal{D}$-towers $\Gamma_{1}$
and $\Gamma_{2}$ by $\Gamma_{1}\sim\Gamma_{2}$ if $\Gamma_{1}\cap\Gamma
_{2}\neq\emptyset$.
\end{definition}

Note that the tops of two towers intersect if and only if the towers are
equivalent, and so we can associate to each equivalence class the unique top
of any representative.

\begin{lemma}
For every dyadic grid $\mathcal{D}$, there are at most $2^{n}$ equivalence
classes of $\mathcal{D}$-towers $\left\{  \Gamma_{1},...\Gamma_{T}\right\}  $,
$1\leq T\leq2^{n}$. Moreover, $\mathbb{R}^{n}$ is the disjoint union of the
tops associated with these equivalence classes.
\end{lemma}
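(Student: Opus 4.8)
The plan is to reinterpret the statement as a tiling problem and then induct on the dimension $n$. First I would dispose of the ``disjoint union'' assertion, which also gives $T\ge 1$. Given any $x\in\mathbb{R}^{n}$, for each $m\ge 1$ let $I_{m}(x)$ be the unique cube of $\mathcal{D}$ with $\ell(I_{m}(x))=2^{m}$ containing $x$; by nesting these form a $\mathcal{D}$-tower whose top contains $x$, so the tops cover $\mathbb{R}^{n}$. Since, as already noted, two tops meet if and only if their towers are equivalent, distinct equivalence classes have disjoint tops; hence the tops of the distinct classes form a genuine partition of $\mathbb{R}^{n}$, and counting equivalence classes is the same as counting the parts of this partition. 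It then remains only to bound the number of parts by $2^{n}$.

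Next I would pin down the shape of a top. Writing $\pi_{i}$ for the projection onto the $i$-th coordinate axis, each cube $I_{m}$ of a tower is the product $\prod_{i}\pi_{i}(I_{m})$, and $\pi_{i}(I_{m})\subseteq\pi_{i}(I_{m+1})$ is an increasing sequence of intervals of length $2^{m}$. A short set-theoretic check shows that an increasing union of boxes equals the box of the (increasing) coordinate unions, so
\[
\operatorname{top}\Gamma=\bigcup_{m}\prod_{i=1}^{n}\pi_{i}(I_{m})=\prod_{i=1}^{n}J_{i},\qquad J_{i}:=\bigcup_{m}\pi_{i}(I_{m}).
\]
Each $J_{i}$ is an increasing union of intervals of length $2^{m}\to\infty$, hence an unbounded interval: either a half-line or all of $\mathbb{R}$. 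Thus every top is a product of $n$ unbounded intervals, and the problem reduces to the claim that a partition of $\mathbb{R}^{n}$ into boxes whose every factor is an unbounded interval has at most $2^{n}$ parts.

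I would prove this last claim by induction on $n$, the case $n=1$ being the elementary fact that $\mathbb{R}$ cannot be partitioned into more than two unbounded intervals (at most one left half-line and one right half-line, unless a single part is all of $\mathbb{R}$). For the inductive step, fix a value $x_{n}=c$ and slice: the slice is partitioned by the boxes $\prod_{i<n}J_{i}^{(t)}$ over those parts $t$ with $c\in J_{n}^{(t)}$, which is again a box-partition of $\mathbb{R}^{n-1}$ by unbounded intervals and so has at most $2^{n-1}$ parts by the inductive hypothesis (this also yields finiteness of each slice). I would then classify the parts by the type of their $n$-th factor $J_{n}^{(t)}$: right half-line ($R$), left half-line ($L$), or all of $\mathbb{R}$ ($F$). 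Taking $c$ larger than all the finitely many relevant endpoints leaves exactly the parts in $R\cup F$ present in the slice, so $|R|+|F|\le 2^{n-1}$; taking $c$ very negative gives $|L|+|F|\le 2^{n-1}$. Adding these yields $T+|F|\le 2^{n}$, whence $T\le 2^{n}$.

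The routine points are the set-theoretic identity for increasing unions of boxes and the $n=1$ base case. The step I expect to require the most care is the bookkeeping in the inductive step: a naive union bound over the three types $R,L,F$ only gives $3\cdot 2^{n-1}$, and it is the overlap of the type $F$ in the two extreme slices — together with the a priori finiteness of each type, which itself must be extracted from the slice bound rather than assumed — that sharpens this to the correct $2^{n}$.
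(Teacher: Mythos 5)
Your proof is correct, but it takes a genuinely different route from the paper's. The paper first records, as you do, that each top is a product $\prod_{k=1}^{n}F_{k}$ of unbounded intervals, but then finishes in one step by a pigeonhole at infinity: to each top it associates a point $\theta\in\Omega_{n}=\{-\infty,\infty\}^{n}$ recording a direction of unboundedness of each factor, observes that two tops sharing the same $\theta$ have all factors unbounded in the same directions and hence intersect, so they coincide, and concludes that the number of tops is at most $\left\vert \Omega_{n}\right\vert =2^{n}$; covering is then the same unit-cube observation you make. You instead isolate the general statement that any partition of $\mathbb{R}^{n}$ into boxes with unbounded interval factors has at most $2^{n}$ parts and prove it by induction on $n$ via slicing, with the $R$, $L$, $F$ classification and the two extreme slices giving $\left\vert R\right\vert +\left\vert F\right\vert \leq2^{n-1}$ and $\left\vert L\right\vert +\left\vert F\right\vert \leq2^{n-1}$. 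This works, and you correctly flag the one delicate point --- that finiteness of each type must first be extracted from the slice bound (e.g.\ the number of $R$-parts appearing in the slice at $c$ is nondecreasing in $c$ and bounded by $2^{n-1}$, so $\left\vert R\right\vert \leq2^{n-1}$) before one may choose $c$ beyond ``all the endpoints.'' What the induction buys you is an explicit, elementary handle on the combinatorics of such tilings; what it costs is length, since the paper's corner-at-infinity assignment proves the same general fact (not just the case of tower tops) without induction and without any separate finiteness step.
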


\begin{proof}
Each $\operatorname{top}\Gamma$ is a product of $n$ infinite intervals,
$\operatorname{top}\Gamma=\prod_{k=1}^{n}F_{k}$, where $F_{k}=\left(
-\infty,a_{k}\right)  $, $\left[  a_{k},\infty\right)  $ or $\left(
-\infty,\infty\right)  $. Consider the set of $2^{n}$ sequences $\Omega
_{n}=\left\{  -\infty,\infty\right\}  ^{n}$. We say that $\operatorname{top}%
\Gamma$ is associated with a sequence $\theta=\left(  \theta_{1},...\theta
_{n}\right)  \in\Omega_{n}$ if $\theta_{k}$ is an endpoint of $F_{k}$. Since
any two tops associated with the same $\theta$ have nonempty intersection,
they must coincide. Thus the number of tops is at most the number of
$\theta^{\prime}s$ in $\Omega_{n}$. Finally, every $x\in\mathbb{R}^{n}$ is in
some unit cube and hence in some top. This completes the proof of the lemma.
\end{proof}

\begin{remark}
These tops are precisely the supercubes of infinite diameter in the unique
dyadic supergrid containing the dyadic grid $\mathcal{D}$.
\end{remark}

For the standard dyadic grid, the sets $\operatorname{top}\Gamma_{t}$ are
precisely the quadrants in dimension $2$, the octants in dimension $3$, etc...
Now define%
\begin{align*}
&  \mathcal{F}_{\operatorname{top}\Gamma_{t}}^{\kappa}\left(  \mu\right)
\equiv\left\{  \beta\in\mathbb{Z}_{+}^{n}:\left\vert \beta\right\vert
\leq\kappa-1:x^{\beta}\in L^{2}\left(  \mathbf{1}_{\operatorname{top}%
\Gamma_{t}}\mu\right)  \right\}  \ ,\\
&  \ \ \ \ \ \ \ \ \ \ \ \ \ \ \ \text{and }\mathcal{P}_{\operatorname{top}%
\Gamma_{t};\kappa}^{n}\left(  \mu\right)  \equiv\operatorname*{Span}\left\{
x^{\beta}\right\}  _{\beta\in\mathcal{F}_{\operatorname{top}\Gamma_{t}%
}^{\kappa}}\ .
\end{align*}

The following theorem was proved for the standard dyadic grid in
\cite{RaSaWi}\footnote{For the standard dyadic grid, the decomposition into
tower tops used here corrects an omission in \cite{RaSaWi}.}, which
establishes the existence of Alpert wavelets, for $L^{2}\left(  \mu\right)  $
in all dimensions, having the three important properties of orthogonality,
telescoping and moment vanishing.

\begin{theorem}
[Weighted Alpert Bases]\label{main1}Let $\mu$ be a locally finite positive
Borel measure on $\mathbb{R}^{n}$, fix $\kappa\in\mathbb{N}$, and fix a dyadic
grid $\mathcal{D}$ in $\mathbb{R}^{n}$.

\begin{enumerate}
\item Then $\left\{  \mathbb{E}_{\operatorname{top}\Gamma_{t};\kappa}^{\mu
}\right\}  _{t=1}^{T}\cup\left\{  \bigtriangleup_{Q;\kappa}^{\mu}\right\}
_{Q\in\mathcal{D}}$ is a complete set of orthogonal projections in
$L_{\mathbb{R}^{n}}^{2}\left(  \mu\right)  $ and%
\begin{align}
f  &  =\sum_{t=1}^{T}\mathbb{E}_{\operatorname{top}\Gamma_{t};\kappa}^{\mu
}f+\sum_{Q\in\mathcal{D}}\bigtriangleup_{Q;\kappa}^{\mu}f,\ \ \ \ \ f\in
L_{\mathbb{R}^{n}}^{2}\left(  \mu\right)  ,\label{Alpert expan}\\
&  \left\langle \mathbb{E}_{\operatorname{top}\Gamma_{t};\kappa}^{\mu
}f,\bigtriangleup_{Q;\kappa}^{\mu}f\right\rangle =\left\langle \bigtriangleup
_{P;\kappa}^{\mu}f,\bigtriangleup_{Q;\kappa}^{\mu}f\right\rangle =0\text{ for
}P\neq Q,\nonumber
\end{align}
where convergence in the first line holds both in $L_{\mathbb{R}^{n}}%
^{2}\left(  \mu\right)  $ norm and pointwise $\mu$-almost everywhere.

\item Moreover we have the telescoping identities%
\begin{equation}
\mathbf{1}_{Q}\sum_{I:\ Q\subsetneqq I\subset P}\bigtriangleup_{I;\kappa}%
^{\mu}=\mathbb{E}_{Q;\kappa}^{\mu}-\mathbf{1}_{Q}\mathbb{E}_{P;\kappa}^{\mu
}\ \text{ \ for }P,Q\in\mathcal{D}\text{ with }Q\subsetneqq P,
\label{telescoping}%
\end{equation}

\item and the moment vanishing conditions%
\begin{equation}
\int_{\mathbb{R}^{n}}\bigtriangleup_{Q;\kappa}^{\mu}f\left(  x\right)
\ x^{\beta}d\mu\left(  x\right)  =0,\ \ \ \text{for }Q\in\mathcal{D},\text{
}\beta\in\mathbb{Z}_{+}^{n},\ 0\leq\left\vert \beta\right\vert <\kappa\ .
\label{mom con}%
\end{equation}

\end{enumerate}
\end{theorem}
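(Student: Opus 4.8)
The plan is to extract the one algebraic identity that powers the whole theorem and then run it through a martingale argument in both directions. First I would record the orthogonal decomposition
\[
\bigoplus_{Q'\in\mathfrak{C}(Q)}\mathcal{P}_{Q';\kappa}^{n}(\mu)=\mathcal{P}_{Q;\kappa}^{n}(\mu)\oplus L_{Q;\kappa}^{2}(\mu),
\]
in which the left side is exactly the span of child-indicators times degree-$<\kappa$ polynomials: the inclusion $\mathcal{P}_{Q;\kappa}^{n}(\mu)\subset\bigoplus_{Q'}\mathcal{P}_{Q';\kappa}^{n}(\mu)$ holds because restricting a polynomial on $Q$ to a child preserves its degree, and the defining conditions $\int_{Q}fx^{\beta}d\mu=0$ of $L_{Q;\kappa}^{2}(\mu)$ say precisely that $f\perp\mathbf{1}_{Q}x^{\beta}$ for $|\beta|<\kappa$, i.e. $f\perp\mathcal{P}_{Q;\kappa}^{n}(\mu)$. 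Passing to orthogonal projections gives the engine identity
\[
\sum_{Q'\in\mathfrak{C}(Q)}\mathbb{E}_{Q';\kappa}^{\mu}=\mathbb{E}_{Q;\kappa}^{\mu}+\bigtriangleup_{Q;\kappa}^{\mu}.
\]
Part (3) then drops out: $\bigtriangleup_{Q;\kappa}^{\mu}f\in L_{Q;\kappa}^{2}(\mu)$ is supported in $Q$ with vanishing $\mu$-moments up to order $\kappa-1$. For part (2) I would restrict the engine identity to $\mathbf{1}_{Q}$ along the unique chain $Q=I_{0}\subsetneqq I_{1}\subsetneqq\cdots\subsetneqq I_{m}=P$ of successive parents; since $Q$ meets only the child $I_{j-1}$ of $I_{j}$, it collapses to $\mathbf{1}_{Q}\bigtriangleup_{I_{j};\kappa}^{\mu}=\mathbf{1}_{Q}\mathbb{E}_{I_{j-1};\kappa}^{\mu}-\mathbf{1}_{Q}\mathbb{E}_{I_{j};\kappa}^{\mu}$, and summing over $j$ telescopes to the claimed identity.

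Next I would dispatch the orthogonality relations by support and moment vanishing. For $P\neq Q$, if $P,Q$ are disjoint then $\bigtriangleup_{P;\kappa}^{\mu}f,\bigtriangleup_{Q;\kappa}^{\mu}f$ have disjoint supports, while if $P\subsetneqq Q$ then on $P$ the function $\bigtriangleup_{Q;\kappa}^{\mu}f$ coincides with a degree-$<\kappa$ polynomial, which is annihilated against $\bigtriangleup_{P;\kappa}^{\mu}f$ by the moment vanishing from part (3). Every finite $Q\in\mathcal{D}$ sits in a tower whose top is one of the $\operatorname{top}\Gamma_{t}$, and since the tops are pairwise disjoint, $Q$ lies in exactly one of them; hence $\bigtriangleup_{Q;\kappa}^{\mu}f$ is orthogonal to $\mathbb{E}_{\operatorname{top}\Gamma_{t'};\kappa}^{\mu}f$ by disjointness of supports when $t'\neq t$, and by moment vanishing (again $\mathbb{E}_{\operatorname{top}\Gamma_{t};\kappa}^{\mu}f$ is a degree-$<\kappa$ polynomial on $Q$) when $t'=t$. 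The disjointness of the tops also makes the $\mathbb{E}_{\operatorname{top}\Gamma_{t};\kappa}^{\mu}$ mutually orthogonal.

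The substance is the expansion and completeness. Fixing $x$ and its bi-infinite tower $\{Q_{m}\}_{m\in\mathbb{Z}}$ of cubes through $x$ with $\ell(Q_{m})=2^{m}$, restricting the engine identity to $Q_{m}$ gives $\bigtriangleup_{Q_{m+1};\kappa}^{\mu}f(x)=\mathbb{E}_{Q_{m};\kappa}^{\mu}f(x)-\mathbb{E}_{Q_{m+1};\kappa}^{\mu}f(x)$, so
\[
\sum_{j=-N+1}^{M}\bigtriangleup_{Q_{j};\kappa}^{\mu}f(x)=\mathbb{E}_{Q_{-N};\kappa}^{\mu}f(x)-\mathbb{E}_{Q_{M};\kappa}^{\mu}f(x).
\]
It remains to let $N,M\to\infty$. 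The fine-scale limit $\mathbb{E}_{Q_{-N};\kappa}^{\mu}f\to f$ ($\mu$-a.e. and in $L^{2}$) is the forward dyadic martingale / Lebesgue differentiation theorem for best degree-$<\kappa$ polynomial approximants, reducing for $\kappa=1$ to ordinary dyadic differentiation and for higher $\kappa$ following since the top-degree corrections are controlled by the shrinking diameter. The hard part will be the coarse-scale limit $\mathbb{E}_{Q_{M};\kappa}^{\mu}f\to\mathbb{E}_{\operatorname{top}\Gamma_{t};\kappa}^{\mu}f$, a reverse-martingale convergence on a set of possibly infinite $\mu$-measure; this is exactly the point of the correction, since the limit must land in $\mathcal{P}_{\operatorname{top}\Gamma_{t};\kappa}^{n}(\mu)$, i.e. only those monomials $x^{\beta}$, $|\beta|<\kappa$, that are genuinely square-integrable against $\mathbf{1}_{\operatorname{top}\Gamma_{t}}\mu$ survive. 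I would make this precise by writing $\mathbb{E}_{Q_{M};\kappa}^{\mu}f=\sum_{\beta}c_{\beta}^{(M)}\mathbf{1}_{Q_{M}}x^{\beta}$ with coefficients solving the finite Gram system $(\int_{Q_{M}}x^{\beta}x^{\beta'}d\mu)_{\beta,\beta'}$, and tracking which entries remain finite as $Q_{M}\uparrow\operatorname{top}\Gamma_{t}$: the non-integrable directions decouple and the surviving system converges to the Gram system of $\mathcal{F}_{\operatorname{top}\Gamma_{t}}^{\kappa}(\mu)$. Combining the two limits yields $f(x)=\mathbb{E}_{\operatorname{top}\Gamma_{t(x)};\kappa}^{\mu}f(x)+\sum_{Q\ni x}\bigtriangleup_{Q;\kappa}^{\mu}f(x)$ pointwise; summing over the finitely many tops gives the stated expansion, and the orthogonality established above, together with Bessel's inequality, upgrades this pointwise convergence on a dense class of bounded compactly supported functions to $L^{2}$ convergence and full completeness.
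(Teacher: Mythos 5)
The paper does not actually prove Theorem \ref{main1}: it defers the construction to \cite{RaSaWi}, contributing only the lemma that the tower tops form a finite tiling of $\mathbb{R}^{n}$ (the ``correction'' in the footnote). So your argument cannot be matched against a proof in the text, and must be judged on its own. Its algebraic skeleton is correct: the orthogonal splitting $\bigoplus_{Q^{\prime}\in\mathfrak{C}(Q)}\mathcal{P}_{Q^{\prime};\kappa}^{n}(\mu)=\mathcal{P}_{Q;\kappa}^{n}(\mu)\oplus L_{Q;\kappa}^{2}(\mu)$, the resulting identity $\sum_{Q^{\prime}\in\mathfrak{C}(Q)}\mathbb{E}_{Q^{\prime};\kappa}^{\mu}=\mathbb{E}_{Q;\kappa}^{\mu}+\bigtriangleup_{Q;\kappa}^{\mu}$, the telescoping, the moment vanishing, and the orthogonality via supports and moments are all sound.

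The genuine gap is the coarse-scale limit, and it is not just unfinished bookkeeping. First, the ranges $\mathcal{P}_{Q_{M};\kappa}^{n}(\mu)$ along a single tower are not nested, so no monotone-projection theorem applies to $\mathbb{E}_{Q_{M};\kappa}^{\mu}$ directly; the clean route is the level-$M$ spaces $V_{M}=\overline{\operatorname{Span}}\{\mathbf{1}_{Q}x^{\beta}:\ell(Q)=2^{M},\ |\beta|<\kappa\}$, which do decrease, whence $P_{V_{M}}\rightarrow P_{\bigcap_{M}V_{M}}$ strongly. Second, and more seriously, the asserted ``decoupling of the non-integrable directions'' fails in dimension $n\geq2$: the limit space $\bigcap_{M}V_{M}$ consists of all $f\in L^{2}(\mu)$ agreeing $\mu$-a.e.\ on each top with a single polynomial of degree less than $\kappa$, and this can be strictly larger than $\operatorname{Span}\{x^{\beta}\}_{\beta\in\mathcal{F}_{\operatorname{top}\Gamma_{t}}^{\kappa}(\mu)}$. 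Concretely, take $\kappa=2$ and $\mu=\sum_{k}a_{k}\delta_{(k,k+\varepsilon_{k})}$ in the first quadrant with $\sum a_{k}<\infty$, $\sum a_{k}k^{2}=\infty$, $\sum a_{k}\varepsilon_{k}^{2}<\infty$ and $\varepsilon_{k}$ nonconstant: neither $x_{1}$ nor $x_{2}$ lies in $L^{2}(\mathbf{1}_{\operatorname{top}\Gamma_{t}}\mu)$, yet $f=x_{1}-x_{2}$ does; this $f$ satisfies $\bigtriangleup_{Q;2}^{\mu}f=0$ for every $Q$ while the monomial-span projection $\mathbb{E}_{\operatorname{top}\Gamma_{t};2}^{\mu}f$ is a constant, so the expansion in part (1) cannot hold as your Gram-matrix scheme would produce it. The fix is to identify the limiting projection as the one onto all square-integrable degree-$<\kappa$ polynomial restrictions to the tops (which in dimension one, but not in general, coincides with the monomial span). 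A smaller omission: almost-everywhere convergence of both martingale limits for $\kappa>1$ and general locally finite $\mu$ is asserted rather than proved; it does not reduce to Lebesgue differentiation and needs a maximal-function estimate or the argument of \cite{RaSaWi}.
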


We can fix\ an orthonormal basis $\left\{  h_{Q;\kappa}^{\mu,a}\right\}
_{a\in\Gamma_{Q,n,\kappa}}$ of $L_{Q;\kappa}^{2}\left(  \mu\right)  $ where
$\Gamma_{Q,n,\kappa}$ is a convenient finite index set. Then
\[
\left\{  h_{Q;\kappa}^{\mu,a}\right\}  _{a\in\Gamma_{Q,n,\kappa}\text{ and
}Q\in\mathcal{D}}%
\]
is an orthonormal basis for $L^{2}\left(  \mu\right)  $, with the
understanding that we add an orthonormal basis of each space $\mathcal{P}%
_{\operatorname{top}\Gamma_{t};\kappa}^{n}\left(  \mu\right)  $ if it is
nontrivial. In particular we have from the theorem above that when
$\mathcal{P}_{\operatorname{top}\Gamma_{t};\kappa}^{n}\left(  \mu\right)
=\left\{  0\right\}  $ for all $1\leq t\leq T$, which is the case for the
doubling measures $\mu$ considered in this paper\footnote{$E_{I}^{\sigma
}\left\vert f\right\vert =\frac{1}{\left\vert I\right\vert _{\sigma}}\int
_{I}\left\vert f\right\vert d\sigma\leq\left\Vert f\right\Vert _{L^{2}\left(
\sigma\right)  }\frac{1}{\sqrt{\left\vert I\right\vert _{\sigma}}}$ tends to
$0$ as $\ell\left(  I\right)  $ goes to infinity since doubling measures
satisfy the reverse doubling condition.}, then
\begin{align*}
\left\Vert f\right\Vert _{L^{2}\left(  \mu\right)  }^{2}  &  =\sum
_{Q\in\mathcal{D}}\left\Vert \bigtriangleup_{Q}^{\mu}f\right\Vert
_{L^{2}\left(  \mu\right)  }^{2}=\sum_{Q\in\mathcal{D}}\left\vert \widehat
{f}\left(  Q\right)  \right\vert ^{2},\\
\text{where }\left\vert \widehat{f}\left(  Q\right)  \right\vert ^{2}  &
\equiv\sum_{a\in\Gamma_{Q,n,\kappa}\text{ }}\left\vert \left\langle
f,h_{Q}^{\mu,a}\right\rangle _{\mu}\right\vert ^{2}.
\end{align*}

\begin{remark}
A dyadic grid $\mathcal{D}$ on the real line is a translate of the standard
grid $\mathcal{D}_{0}$ \emph{if and only if} there are exactly two tops, and
if the tops are $\left(  -\infty,a\right)  $ and $\left[  a,\infty\right)  $,
then $\mathcal{D}=\mathcal{D}_{0}+a$.
\end{remark}

In the case $\kappa=1$, the construction in Theorem \ref{main1} reduces to the
familiar Haar wavelets, where we have the following bound for the Alpert
projections $\mathbb{E}_{I;\kappa}^{\mu}$ (\cite[see (4.7) on page 14]%
{Saw6}):
\begin{equation}
\left\Vert \mathbb{E}_{I;\kappa}^{\mu}f\right\Vert _{L_{I}^{\infty}\left(
\mu\right)  }\lesssim E_{I}^{\mu}\left\vert f\right\vert \leq\sqrt{\frac
{1}{\left\vert I\right\vert _{\mu}}\int_{I}\left\vert f\right\vert ^{2}d\mu
},\ \ \ \ \ \text{for all }f\in L_{\operatorname*{loc}}^{2}\left(  \mu\right)
. \label{analogue}%
\end{equation}
In terms of the Alpert coefficient vectors $\widehat{f}\left(  I\right)
\equiv\left\{  \left\langle f,h_{Q;\kappa}^{\mu,a}\right\rangle \right\}
_{a\in\Gamma_{Q,n,\kappa}}$, we thus have%
\begin{equation}
\left\vert \widehat{f}\left(  I\right)  \right\vert =\left\Vert \bigtriangleup
_{I;\kappa}^{\sigma}f\right\Vert _{L^{2}\left(  \sigma\right)  }\leq\left\Vert
\bigtriangleup_{I;\kappa}^{\sigma}f\right\Vert _{L^{\infty}\left(
\sigma\right)  }\sqrt{\left\vert I\right\vert _{\sigma}}\leq C\left\Vert
\bigtriangleup_{I;\kappa}^{\sigma}f\right\Vert _{L^{2}\left(  \sigma\right)
}=C\left\vert \widehat{f}\left(  I\right)  \right\vert . \label{analogue'}%
\end{equation}

\section{A generalization}

Let $\mu$ be a locally finite positive Borel measure on $\mathbb{R}^{n}$, and
let $\mathcal{P}=\operatorname{Span}\left\{  \varphi_{i}\right\}  _{i=1}^{d}$
denote a $d$-dimensional subspace of locally $L^{2}\left(  \mu\right)  $
complex-valued functions, $\varphi_{i}\in L_{\operatorname{loc}}^{2}\left(
\mu\right)  $, which contains the constant function $\mathbf{1}$ on
$\mathbb{R}^{n}$. For example, $\mathcal{P}$ could be the real finite
dimensional linear space of real polynomials of degree less than $\kappa$ used
in the construction of weighted Alpert wavelets above. More generally,
$\mathcal{P}$ could consist of complex polynomials of $z$ (or instead
$\overline{z}$) of degree less than $\kappa$. In any of these cases, the
finite dimensional space $\mathcal{P}$ is invariant under translations,
dilations and rotations, making them well suited to analysis involving
Taylor's polynomial.

Now define%
\[
L_{Q;\mathcal{P}}^{2}\left(  \mu\right)  \equiv\left\{  f=%
{\displaystyle\sum\limits_{Q^{\prime}\in\mathfrak{C}\left(  Q\right)  }}
\mathbf{1}_{Q^{\prime}}p_{Q^{\prime};\mathcal{P}}\left(  x\right)  :\int
_{Q}f\left(  x\right)  \varphi_{i}\left(  x\right)  d\mu\left(  x\right)
=0,\ \ \ \text{for }1\leq i\leq d\right\}  ,
\]
where $p_{Q^{\prime};\mathcal{P}}\left(  x\right)  =\sum_{i=1\ }%
^{d}a_{Q^{\prime};i}\varphi_{i}\left(  x\right)  $ is a linear combination of
the functions $\varphi_{i}$, $1\leq i\leq d$.

\begin{theorem}
[Weighted Alpert Bases]Let $\mu$ be a locally finite positive Borel measure on
$\mathbb{R}^{n}$, fix $\mathcal{P}$ a $d$-dimensional subspace of
$L_{\operatorname{loc}}^{2}\left(  \mu\right)  $ containing the function
$\mathbf{1}$, and fix a dyadic grid $\mathcal{D}$ in $\mathbb{R}^{n}$.

\begin{enumerate}
\item Then $\left\{  \mathbb{E}_{\operatorname{top}\Gamma_{t};\kappa}^{\mu
}\right\}  _{t=1}^{T}\cup\left\{  \bigtriangleup_{Q;\kappa}^{\mu}\right\}
_{Q\in\mathcal{D}}$ is a complete set of orthogonal projections in
$L_{\mathbb{R}^{n}}^{2}\left(  \mu\right)  $ and%
\begin{align*}
f  &  =\sum_{t=1}^{T}\mathbb{E}_{\operatorname{top}\Gamma_{t};\kappa}^{\mu
}f+\sum_{Q\in\mathcal{D}}\bigtriangleup_{Q;\kappa}^{\mu}f,\ \ \ \ \ f\in
L_{\mathbb{R}^{n}}^{2}\left(  \mu\right)  ,\\
&  \left\langle \mathbb{E}_{\operatorname{top}\Gamma_{t};\mathcal{P}}^{\mu
}f,\bigtriangleup_{Q;\mathcal{P}}^{\mu}f\right\rangle =\left\langle
\bigtriangleup_{P;\mathcal{P}}^{\mu}f,\bigtriangleup_{Q;\mathcal{P}}^{\mu
}f\right\rangle =0\text{ for }P\neq Q,
\end{align*}
where convergence in the first line holds both in $L_{\mathbb{R}^{n}}%
^{2}\left(  \mu\right)  $ norm and pointwise $\mu$-almost everywhere.

\item Moreover we have the telescoping identities%
\[
\mathbf{1}_{Q}\sum_{I:\ Q\subsetneqq I\subset P}\bigtriangleup_{I;\kappa}%
^{\mu}=\mathbb{E}_{Q;\mathcal{P}}^{\mu}-\mathbf{1}_{Q}\mathbb{E}%
_{P;\mathcal{P}}^{\mu}\ \text{ \ for }P,Q\in\mathcal{D}\text{ with
}Q\subsetneqq P,
\]

\item and the moment vanishing conditions%
\[
\int_{\mathbb{R}^{n}}\bigtriangleup_{Q;\mathcal{P}}^{\mu}f\left(  x\right)
\ \varphi\left(  x\right)  d\mu\left(  x\right)  =0,\ \ \ \text{for }%
Q\in\mathcal{D},\text{ }\varphi\in\mathcal{P}\ .
\]

\end{enumerate}
\end{theorem}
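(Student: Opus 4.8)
The plan is to observe that the proof of Theorem \ref{main1} for polynomials of degree less than $\kappa$ used only three features of that space: it is finite dimensional, it is contained in $L_{\operatorname{loc}}^{2}(\mu)$, and it contains the constant function $\mathbf{1}$. The translation, dilation and rotation invariance noted above plays no role in the \emph{basis} property (it is relevant only for quantitative Alpert estimates), so the entire argument transfers with ``polynomial of degree $<\kappa$'' replaced throughout by ``$\varphi\in\mathcal{P}$''. Accordingly I would first check that the projections $\mathbb{E}_{Q;\mathcal{P}}^{\mu}$ (onto $\mathcal{P}_{Q;\mathcal{P}}^{n}(\mu)$) and $\bigtriangleup_{Q;\mathcal{P}}^{\mu}$ (onto $L_{Q;\mathcal{P}}^{2}(\mu)$) are well defined: on a fixed cube $Q$ the span of $\{\mathbf{1}_{Q}\varphi_{i}\}$ inside $L^{2}(\mathbf{1}_{Q}\mu)$ is finite dimensional, so orthogonal projection onto it exists, with the harmless feature that its dimension may drop below $d$ when the $\varphi_{i}$ become $\mu$-dependent on $Q$.

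The engine of the proof is the single-scale identity. Since each $\mathbf{1}_{Q}\varphi_{i}=\sum_{Q^{\prime}\in\mathfrak{C}(Q)}\mathbf{1}_{Q^{\prime}}\varphi_{i}$, we have $\mathcal{P}_{Q;\mathcal{P}}^{n}(\mu)\subset\bigoplus_{Q^{\prime}\in\mathfrak{C}(Q)}\mathcal{P}_{Q^{\prime};\mathcal{P}}^{n}(\mu)$, and by definition $L_{Q;\mathcal{P}}^{2}(\mu)$ is exactly the orthogonal complement of $\mathcal{P}_{Q;\mathcal{P}}^{n}(\mu)$ within this direct sum. This yields the projection identity
\begin{equation*}
\sum_{Q^{\prime}\in\mathfrak{C}(Q)}\mathbb{E}_{Q^{\prime};\mathcal{P}}^{\mu}=\mathbb{E}_{Q;\mathcal{P}}^{\mu}+\bigtriangleup_{Q;\mathcal{P}}^{\mu},
\end{equation*}
the children projections being mutually orthogonal because their ranges are supported on disjoint cubes. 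Statement (3) is then immediate, since $\bigtriangleup_{Q;\mathcal{P}}^{\mu}f\in L_{Q;\mathcal{P}}^{2}(\mu)$ satisfies $\int_{Q}(\bigtriangleup_{Q;\mathcal{P}}^{\mu}f)\varphi_{i}\,d\mu=0$ for every $i$, hence against every $\varphi\in\mathcal{P}$ by linearity. Statement (2) follows by summing the displayed identity along the chain of dyadic ancestors $Q=I_{0}\subsetneqq I_{1}\subsetneqq\cdots\subsetneqq I_{m}=P$: multiplying by $\mathbf{1}_{Q}$ collapses $\sum_{I^{\prime}\in\mathfrak{C}(I_{j})}\mathbb{E}_{I^{\prime};\mathcal{P}}^{\mu}$ to the single term $\mathbf{1}_{Q}\mathbb{E}_{I_{j-1};\mathcal{P}}^{\mu}$, and the sum telescopes to $\mathbb{E}_{Q;\mathcal{P}}^{\mu}-\mathbf{1}_{Q}\mathbb{E}_{P;\mathcal{P}}^{\mu}$.

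For the orthogonality in (1), distinct cubes are either disjoint, in which case the supports of $\bigtriangleup_{P;\mathcal{P}}^{\mu}f$ and $\bigtriangleup_{Q;\mathcal{P}}^{\mu}f$ are disjoint, or nested, say $Q\subsetneqq P$. In the nested case I would restrict $\bigtriangleup_{P;\mathcal{P}}^{\mu}f$ to $Q$: it there agrees with $\mathbf{1}_{Q}p_{P^{\ast};\mathcal{P}}$ for the child $P^{\ast}\in\mathfrak{C}(P)$ containing $Q$, i.e. with an element of $\mathcal{P}$ on $Q$, and so pairs to zero against $\bigtriangleup_{Q;\mathcal{P}}^{\mu}f$ by the moment vanishing just established. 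The same restriction argument, applied to $\mathbb{E}_{\operatorname{top}\Gamma_{t};\mathcal{P}}^{\mu}f$ in place of $\bigtriangleup_{P;\mathcal{P}}^{\mu}f$, gives $\langle\mathbb{E}_{\operatorname{top}\Gamma_{t};\mathcal{P}}^{\mu}f,\bigtriangleup_{Q;\mathcal{P}}^{\mu}f\rangle=0$. Here one must define $\mathcal{P}_{\operatorname{top}\Gamma_{t};\mathcal{P}}^{n}(\mu)$ using only those $\varphi\in\mathcal{P}$ lying in $L^{2}(\mathbf{1}_{\operatorname{top}\Gamma_{t}}\mu)$, exactly as $\mathcal{F}_{\operatorname{top}\Gamma_{t}}^{\kappa}$ was used in the polynomial case.

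Completeness and the expansion are the only genuinely analytic point. Using telescoping, the partial sum of the series over the finitely many cubes between a small $Q$ and a large $P\subset\operatorname{top}\Gamma_{t}$ equals $\mathbb{E}_{Q;\mathcal{P}}^{\mu}f-\mathbf{1}_{Q}\mathbb{E}_{P;\mathcal{P}}^{\mu}f$. Letting $Q\downarrow x$ and $P\uparrow\operatorname{top}\Gamma_{t}$, the spaces $\mathcal{P}_{Q;\mathcal{P}}^{n}(\mu)$ form an increasing filtration whose union is dense (this is where $\mathbf{1}\in\mathcal{P}$ is essential, guaranteeing $\mathbb{E}_{Q;\mathcal{P}}^{\mu}$ reproduces averages), so $\mathbb{E}_{Q;\mathcal{P}}^{\mu}f\to f$ both in $L^{2}(\mu)$ and $\mu$-a.e. by the martingale convergence theorem, while the decreasing reverse martingale gives $\mathbb{E}_{P;\mathcal{P}}^{\mu}f\to\mathbb{E}_{\operatorname{top}\Gamma_{t};\mathcal{P}}^{\mu}f$; summing over the $T\le2^{n}$ tops and invoking the lemma above (that the tops partition $\mathbb{R}^{n}$) yields the expansion. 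I expect the main obstacle to be precisely this convergence at infinity: one must justify the reverse-martingale limit $\mathbb{E}_{P;\mathcal{P}}^{\mu}f\to\mathbb{E}_{\operatorname{top}\Gamma_{t};\mathcal{P}}^{\mu}f$ and confirm that the limit lands in the correct, possibly reduced, space $\mathcal{P}_{\operatorname{top}\Gamma_{t};\mathcal{P}}^{n}(\mu)$, which is the same accounting that had to be corrected in \cite{RaSaWi}. A secondary point requiring care is the complex-conjugation convention: when $\mathcal{P}$ is not closed under conjugation (e.g.\ polynomials in $z$ only), the moment pairing $\int_{Q}f\varphi_{i}\,d\mu$ and the Hermitian inner product differ, so one must fix a single convention throughout to ensure $L_{Q;\mathcal{P}}^{2}(\mu)$ really is the orthogonal complement used above.
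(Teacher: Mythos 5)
Your proposal is correct and follows exactly the route the paper intends: the paper in fact states this generalized theorem \emph{without proof}, implicitly deferring to the construction behind Theorem \ref{main1} and \cite{RaSaWi}, and your reconstruction (single-scale identity $\sum_{Q^{\prime}\in\mathfrak{C}(Q)}\mathbb{E}_{Q^{\prime};\mathcal{P}}^{\mu}=\mathbb{E}_{Q;\mathcal{P}}^{\mu}+\bigtriangleup_{Q;\mathcal{P}}^{\mu}$, telescoping, restriction argument for orthogonality, martingale/filtration convergence with the tower tops supplying the limit at infinity) is precisely that argument with ``polynomial of degree $<\kappa$'' replaced by ``element of $\mathcal{P}$''. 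Your two flagged caveats --- identifying the limit $\mathbb{E}_{P;\mathcal{P}}^{\mu}f\to\mathbb{E}_{\operatorname{top}\Gamma_{t};\mathcal{P}}^{\mu}f$ with the possibly reduced space at the top (the very omission this paper corrects in \cite{RaSaWi}), and the bilinear-versus-Hermitian pairing when $\mathcal{P}$ is not conjugation-closed --- are genuine and are not addressed in the paper's text, so they are worth retaining.
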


In the case when $\mathcal{P}$ consists of complex polynomials of $z$ (or
instead $\overline{z}$) of degree less than $\kappa$, we have analogues of
(\ref{analogue}) and (\ref{analogue'}). More generally, if $N\in\mathbb{N}$
and $\mathcal{P}=\operatorname{Span}\left\{  \varphi_{i}\right\}  _{i=1}^{d}$
where each of the functions $\varphi_{i}$ is uniformly finite type $\kappa$,
then the analogues of (\ref{analogue}) and (\ref{analogue'}) hold. Here we say
$\varphi\in C^{\kappa}\left(  \mathbb{R}^{n}\right)  $ is \emph{uniformly
finite type} $\kappa$ if%
\[
\inf_{Q\text{ a cube in }\mathbb{R}^{n}}\inf_{a\in Q}\sum_{\left\vert
\alpha\right\vert <\kappa}\left\vert \frac{\partial^{\left\vert \alpha
\right\vert }\varphi}{\partial x^{\alpha}}\left(  a\right)  \right\vert
\ell\left(  Q\right)  ^{\left\vert \alpha\right\vert }>0.
\]
The inequalities in (\ref{analogue}) and (\ref{analogue'}) then follow from
Taylor's formula%
\begin{align*}
\varphi\left(  x\right)   &  =\sum_{\ell=0}^{\kappa-1}\frac{\left[  \left(
x-a\right)  \cdot\nabla\right]  ^{\ell}}{\ell!}\varphi\left(  a\right)
+O\left(  \left\vert x-a\right\vert ^{\kappa}\right) \\
&  =\sum_{\left\vert \alpha\right\vert <\kappa}c_{\alpha,\kappa}\frac
{\partial^{\left\vert \alpha\right\vert }\varphi}{\partial x^{\alpha}}\left(
a\right)  \left(  x-a\right)  ^{\alpha}+O\left(  \left\vert x-a\right\vert
^{\kappa}\right)  ,
\end{align*}
together with properties of doubling measures proved in \cite{Saw6}. We leave
the straightforward details to the interested reader.

\end{document}